\newtheorem{theorem}{Theorem}
\begin{document}

\title{A method for proving Ramanujan series for $1/\pi$}
\author{Jesús Guillera} 
\address{University of Zaragoza, Department of mathematics, 50009 Zaragoza (Spain)}
\email{jguillera@gmail.com}
\keywords{Hypergeometric series; Ramanujan series for $1/\pi$; Legendre's relation}
\subjclass[2010]{Primary 33E05, 33C20; Secondary 11F03, 33C75.}

\maketitle

\begin{abstract}
In a famous paper of $1914$ Ramanujan gave a list of $17$ extraordinary formulas for the number $\pi$. In this paper we explain a general method to prove them, based on an original idea of James Wan and in some own ideas.
\end{abstract}

\section{Introduction}
In his famous paper \cite{Ramanujan} of $1914$ Ramanujan gave a list of $17$ extraordinary formulas for the number $\pi$, which are of the following form
\begin{equation}\label{ramanujan-series}
\sum_{n=0}^{\infty} \frac{\left(\frac12\right)_n\left(\frac1s \right)_n\left(1-\frac1s \right)_n}{(1)_n^3} (a+bn) \, z^n \, = \frac{1}{\pi}, \qquad (c)_{_0}=1, \quad (c)_{_n}=\prod_{j=1}^{n} (c+j-1),
\end{equation}
where $s \in \{2, 3, 4, 6\}$, and $z, b, a$ are algebraic numbers. Four of his formulas are
\begin{align}\label{rama-examples-4}
&\sum_{n=0}^{\infty} \frac{\left(\frac12\right)_n^3}{(1)_n^3} \frac{42n+5}{64^n} = \frac{16}{\pi}, \\
&\sum_{n=0}^{\infty} \frac{\left(\frac12\right)_n\left(\frac13 \right)_n\left(\frac23 \right)_n}{(1)_n^3}
(33n+4) \left( \frac{4}{125} \right)^{\!n} = \frac{15 \sqrt 3}{2\pi}, \\
&\sum_{n=0}^{\infty} \frac{\left(\frac12\right)_n\left(\frac14 \right)_n\left(\frac34 \right)_n}{(1)_n^3}
\frac{26390n+1103}{99^{4n+2}} = \frac{\sqrt 2}{4\pi}, \\
&\sum_{n=0}^{\infty} \frac{\left(\frac12\right)_n\left(\frac16 \right)_n\left(\frac56 \right)_n}{(1)_n^3} (63n+8) \left( \frac{-4}{5} \right)^{\! 3n} = \frac{5\sqrt{15}}{\pi},
\end{align}
corresponding to $s=2, 3, 4, 6$ respectively. However Ramanujan wrote few details of his proofs and the first rigorous deductions were made by the Borwein brothers in $1985$, see \cite{BorweinBorwein}. Other general proofs based on the modular theory are for example in \cite{Baruah-Berndt-1, Baruah-Berndt-2, BorweinBorwein, Baruah-Berndt-Chan, Chan-Liaw-Tan, Zudilin}, and we believe that the method that we use in this paper is an interesting alternative one. Other kind of proofs are in \cite{Gui-Zud-translation, Coo-Zu, Cooper-book, Gui-WZ-pairs, Gui-rama-termina}, and two beautiful surveys are \cite{Baruah-Berndt-Chan} and \cite{Zudilin}.

\section{A method for proving Ramanujan series for $1/\pi$}

We will use the following notation:
\[
F_s(\alpha)={}_2F_1\biggl(\begin{matrix} \frac1s, \, 1-\frac1s \\ 1 \end{matrix}\biggm| \alpha \biggr), \quad G_s(\alpha)=\alpha \frac{dF_s(\alpha)}{d\alpha},
\]
and the following version of the Legendre's relation:
\begin{equation}\label{Legendre}
\alpha F_s(\alpha) G_s(\beta)+\beta F_s(\beta) G_s(\alpha) = \frac{1}{\pi} \, \sin \frac{\pi}{s}, \qquad \beta = 1-\alpha.
\end{equation}
We will show that this relation explains why $\pi$ appears in the Ramanujan series. The other ingredients we need to prove the series are: a transformation of modular origin and the known Clausen's identity
\begin{equation}\label{factorization}
\sum_{n=0}^{\infty} \frac{\left(\frac12\right)_n\left(\frac1s \right)_n\left(1-\frac1s \right)_n}{(1)_n^3} z^n =
F_{_s} (\alpha) F_{_s}(\alpha), \qquad z=4 \alpha (1-\alpha).
\end{equation}
Our method consist in  a variant of a James Wan's original idea \cite{Wan}. We explained it in \cite{Gui-fam-rama-orr} and \cite{Gui-more-rama-orr} proving some new Ramanujan-Orr series for $1/\pi$. In this paper we show how to apply it to prove Ramanujan series in a simple way when we know the required transformation, or what is equivalent: when we know the required modular equation, because the multiplier is given by the formula (\ref{m-alpha-beta}). We explain our technique with two examples: one corresponding to a series of positive terms and the other one to an alternating series.

\subsection{Example for series of positive terms}
We reprove below the following series for $1/\pi$ of level $\ell=2$ ($s=4$) due to Ramanujan:
\begin{equation}\label{rama-81}
\sum_{n=0}^{\infty} \frac{\left(\frac12\right)_n\left(\frac14\right)_n\left(\frac34\right)_n}{(1)_n^3} \frac{1}{3^{4n}} (10n+1) = \frac{9 \sqrt 2}{4 \pi}.
\end{equation}
We begin with the following theorem:
\begin{theorem}
For $s=4$ (level $\ell=2$), we have
\begin{align}
F_{_4} (\alpha_0) &=\frac{\sqrt{5}}{5} F_{_4} (\beta_0), \label{F-relation1} \\
G_{_4} (\alpha_0) &= \frac{16 \sqrt{5}-36}{5} F_{_4} (\beta_0)+\frac{161 \sqrt{5}-360}{5} G_{_4} (\beta_0), \label{G-relation1}
\end{align}
where
\[
\alpha_0 = \frac12-\frac{2\sqrt 5}{9}, \quad \beta_0 = 1 - \alpha_0 = \frac12+\frac{2\sqrt 5}{9}.
\]
\end{theorem}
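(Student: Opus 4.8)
The plan is to derive both identities from a single transformation of modular origin, of the kind the method supplies, connecting $F_4$ at two arguments related by a modular equation. For $s=4$ (level $\ell=2$) the surd $\sqrt5$ signals that the relevant modular equation is the one of degree $5$: writing $\beta=\beta(\alpha)$ for the corresponding algebraic correspondence, the transformation gives a multiplier relation
\begin{equation*}
F_4(\alpha)=m(\alpha)\,F_4\bigl(\beta(\alpha)\bigr),
\end{equation*}
where $m$ is the algebraic multiplier furnished by the closed form (\ref{m-alpha-beta}). The point $\alpha_0=\tfrac12-\tfrac{2\sqrt5}{9}$ is precisely the singular value at which the degree-$5$ correspondence meets the complementary value, $\beta(\alpha_0)=1-\alpha_0=\beta_0$; equivalently it is the fixed point obtained by composing the degree-$5$ step with the Fricke involution of level $2$, which is what forces the two arguments to be $\alpha_0$ and $1-\alpha_0$. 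As a preliminary check one verifies $4\alpha_0(1-\alpha_0)=1/81$, so that $z=3^{-4}$ is exactly the value occurring in the target series (\ref{rama-81}).

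First I would prove (\ref{F-relation1}). Specializing the multiplier relation at $\alpha=\alpha_0$, where $\beta(\alpha_0)=\beta_0$, collapses it to $F_4(\alpha_0)=m(\alpha_0)\,F_4(\beta_0)$, so the whole content of (\ref{F-relation1}) is the evaluation $m(\alpha_0)=\sqrt5/5=1/\sqrt5$. This value is obtained by substituting $\alpha=\alpha_0$, $\beta=\beta_0$ into (\ref{m-alpha-beta}) and simplifying the resulting radical; the answer $1/\sqrt5$ is the expected one, reflecting that the degree of the underlying transformation is $5$.

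Next I would obtain (\ref{G-relation1}) by differentiating the same relation once. Applying $\alpha\,d/d\alpha$ to $F_4(\alpha)=m(\alpha)F_4(\beta(\alpha))$ and using $G_4=\alpha F_4'$ yields
\begin{equation*}
G_4(\alpha)=\alpha\,m'(\alpha)\,F_4\bigl(\beta(\alpha)\bigr)+m(\alpha)\,\frac{\alpha}{\beta(\alpha)}\,\beta'(\alpha)\,G_4\bigl(\beta(\alpha)\bigr).
\end{equation*}
Evaluating at $\alpha_0$ and inserting $m(\alpha_0)=1/\sqrt5$ together with $m'(\alpha_0)$ and $\beta'(\alpha_0)$ --- the latter two read off from (\ref{m-alpha-beta}) and from implicit differentiation of the modular equation defining $\beta(\alpha)$ --- should reproduce the two coefficients $\tfrac{16\sqrt5-36}{5}$ and $\tfrac{161\sqrt5-360}{5}$. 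No appeal to the hypergeometric differential equation is needed, since only one differentiation occurs; the fact that $F_4(\alpha)$ and $F_4(1-\alpha)$ solve the same second-order equation (which holds because the parameters $(\tfrac14,\tfrac34;1)$ satisfy $2\cdot1=\tfrac14+\tfrac34+1$) serves only to explain conceptually why a linear relation with constant coefficients must exist between the two period vectors at $\alpha_0$ and $\beta_0$.

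The step I expect to be the main obstacle is the explicit description of the degree-$5$ correspondence $\beta(\alpha)$ for level $2$ and the accurate evaluation at the singular point: writing down the modular equation, extracting $m$, $m'$ and $\beta'$ there, and then collapsing the nested radicals so that the coefficients reduce exactly to the stated expressions in $\sqrt5$ (with the correct sign of the surd and the correct rationalization). Once the transformation is in hand, everything else is a routine specialization, which is exactly the point of the method. A useful safeguard is to confirm both identities numerically to high precision and recognize the algebraic coefficients, but the modular-equation derivation is what exhibits them as genuine consequences of the transformation and, via Legendre's relation (\ref{Legendre}), ultimately of the series (\ref{rama-81}).
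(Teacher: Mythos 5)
Your plan is essentially the paper's proof: the paper uses exactly the degree-$5$, level-$2$ transformation $F_4(\alpha)=mF_4(\beta)$ (taken in explicit parametrized form, in a variable $x$, from page 608 of Cooper's book), solves $\beta=1-\alpha$ to land at $\alpha_0=\tfrac12-\tfrac{2\sqrt5}{9}$ with $m_0=1/\sqrt5$, and differentiates once to get (\ref{G-relation1}), just as you describe. The only ingredient you leave open --- the explicit form of the degree-$5$ correspondence and the values of $m'$, $\alpha'$, $\beta'$ at the special point --- is supplied in the paper by that explicit rational parametrization rather than by implicit differentiation of a modular equation $P(\alpha,\beta)=0$, but this is a difference of bookkeeping, not of method.
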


\begin{proof}
In \cite[page  608]{Cooper-book} we see the following transformation of level 2 and degree $1/d$ with $d=5$: 
\begin{equation}\label{F-level4-degree5}
F_4 (\alpha) = m F_4(\beta)
\end{equation}
where
\[
\alpha=\frac{64x^5(1+x)}{(1+4x^2)(1-2x-4x^2)^2}, \quad \beta = \frac{64x(1+x)^5}{(1+4x^2)(1+22x-4x^2)^2}, 
\]
and
\[
m=\frac{\sqrt{1-2x-4x^2}}{\sqrt{1+22x-4x^2}}.
\]
If we take $\beta=1-\alpha$, then we get the following solution
\[
x_0=\frac{\sqrt 5 -2}{2}, \quad \alpha_0=\frac12-\frac{2\sqrt 5}{9}, \quad \beta_0=\frac12+\frac{2\sqrt 5}{9}, \quad m_0=\frac{1}{\sqrt 5}.
\]
In addition, we have
\[
\alpha'_0 = \frac{\sqrt 5 + 2}{27}, \quad \beta'_0 = \frac{\sqrt 5 + 2}{27}, \quad m'_0 = \frac{-8 \sqrt{5} - 16}{15}.
\]
Differentiating (\ref{F-level4-degree5}) with respect to $\alpha$ we have
\begin{equation}\label{G-level4-degree5}
G_{_4}(\alpha) = \alpha \, \frac{m'}{\alpha'} F_{_4}(\beta) + \alpha \, \frac{m}{\beta} \, \frac{\beta'}{\alpha'} \, G_{_4}(\beta),
\end{equation}
where the $'$ stands for the derivative with respect to $x$. Finally, substituting $x=x_0$, in (\ref{F-level4-degree5}) and (\ref{G-level4-degree5}) we arrive at the results stated by the theorem.
\end{proof}
We are ready to prove (\ref{rama-81}). 
\begin{proof}
Applying to both sides of (\ref{factorization}) with $s=4$ the operator 
\[
\left. \frac29+\frac{20}{9} z \frac{d}{dz} \right|_{z=z_0} = \left. \frac29+\frac{20}{9} \frac{z}{z'} \frac{d}{d\alpha} \right|_{\alpha=\alpha_0}
\]
where here the $'$ means the derivative with respect to $\alpha$, we obtain
\begin{multline}
\sum_{n=0}^{\infty} \frac{\left(\frac12\right)_n\left(\frac14\right)_n\left(\frac34\right)}{(1)_n^3} \frac{1}{3^{4n}} \left(\frac{20}{9}n+\frac29 \right) \\ 
= \frac29 F_{_4}(\alpha_0) F_{_4}(\alpha_0) + \left( \frac{\sqrt 5}{2} + \frac{10}{9} \right) F_{_4}(\alpha_0) G_{_4}(\alpha_0) + \left( \frac{\sqrt 5}{2} + \frac{10}{9} \right) G_{_4}(\alpha_0) F_{_4}(\alpha_0).
\end{multline}
Observe that we have intentionally repeated two equal terms without simplifying the sum. Then, if we use the relations (\ref{F-relation1}) and (\ref{G-relation1}) to replace one factor $F_4(\alpha_0)$ of the two first terms and to replace $G_4(\alpha_0)$ in the last term, we arrive at
\[
\alpha_0 F_{_4(}\alpha_0) G_{_4}(\beta_0) + \beta_0 F_{_4}(\beta_0) G_{_4}(\alpha_0),
\]
which in view of (\ref{Legendre}) is equal to
\[
\frac{1}{\pi} \sin \frac{\pi}{4} = \frac{\sqrt 2}{2 \pi},
\]
and we are done.
\end{proof}

\subsection{Example for alternating series}
Here we prove with our method the following alternating series of level $\ell=2$ due to Ramanujan:
\begin{equation}\label{rama-m48}
\sum_{n=0}^{\infty} \frac{\left(\frac12\right)_n\left(\frac14\right)_n\left(\frac34\right)_n}{(1)_n^3} \left(\frac{-1}{48}\right)^{\!n} (28n+3) = \frac{16 \sqrt 3}{3 \pi}.
\end{equation}

\begin{theorem}
For $s=4$ (level $\ell=2$), we have
\begin{equation}\label{F-relation2}
F_{_4} (\alpha_0) = \frac{(3 + i)\sqrt 2}{10} F_{_4} (\beta_0), 
\end{equation}
and
\begin{multline}\label{G-relation2}
G_{_4} (\alpha_0) = \left( \frac{-63}{20} \sqrt{2} +\frac{9 \sqrt 6}{5} \right) F_{_4} (\beta_0)
\\ + \left[ \left(  \frac{-291}{10} \sqrt 2 + \frac{84}{5} \sqrt{6} \right) + 
\left( \frac{-28}{5} \sqrt{6} + \frac{97}{10} \sqrt{2} \right) i \right] G_{_4}(\beta_0),
\end{multline}
where
\[
\alpha_0=\frac12-\frac{7\sqrt 3}{24}, \quad \beta_0=\frac12+\frac{7\sqrt 3}{24}.
\]
\end{theorem}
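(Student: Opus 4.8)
The plan is to follow the proof of the first theorem verbatim, re-using the \emph{same} level-2, degree-5 transformation (\ref{F-level4-degree5}) of \cite[page 608]{Cooper-book}, but now choosing a \emph{complex} solution of the equation $\beta=1-\alpha$. Concretely, I would again impose $\alpha(x)+\beta(x)=1$; this polynomial condition in $x$ has, besides the real root $x_0=(\sqrt5-2)/2$ that produced (\ref{rama-81}), further roots, and I expect the one relevant here to be
\[
x_0=\frac{(2\sqrt3-3)+(\sqrt3-2)\,i}{4}.
\]
Substituting this value into the rational expression for $\alpha$ should return the \emph{real} number $\alpha_0=\tfrac12-\tfrac{7\sqrt3}{24}$ (and $\beta_0=1-\alpha_0$), exactly the singular modulus attached to $z_0=4\alpha_0\beta_0=-1/48$ in (\ref{rama-m48}).

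Next I would evaluate the multiplier $m_0=m(x_0)$. Squaring the defining expression $m^2=(1-2x-4x^2)/(1+22x-4x^2)$ at $x_0$ gives $m_0^2=(4+3i)/25$, whence $m_0=(3+i)\sqrt2/10$, which is precisely (\ref{F-relation2}). The point worth emphasising is \emph{why} $m_0$ is complex although $\alpha_0,\beta_0$ are real: since $\beta_0=\tfrac12+\tfrac{7\sqrt3}{24}>1$ lies beyond the branch point $\alpha=1$ of ${}_2F_1$, the quantity $F_{_4}(\beta_0)$ is an analytic continuation across the cut and is itself complex, and the complex factor $m_0$ is exactly what restores the real value $F_{_4}(\alpha_0)$. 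This is the structural novelty with respect to the positive-term example.

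For (\ref{G-relation2}) I would differentiate (\ref{F-level4-degree5}) with respect to $\alpha$, obtaining the same shape as (\ref{G-level4-degree5}),
\[
G_{_4}(\alpha)=\alpha\,\frac{m'}{\alpha'}\,F_{_4}(\beta)+\alpha\,\frac{m}{\beta}\,\frac{\beta'}{\alpha'}\,G_{_4}(\beta),
\]
with $'$ now denoting $d/dx$, and then evaluate $\alpha_0',\beta_0',m_0'$ at the complex $x_0$. Substituting, I expect the coefficient $\alpha_0\,m_0'/\alpha_0'$ of $F_{_4}(\beta_0)$ to collapse to the real number $-\tfrac{63}{20}\sqrt2+\tfrac{9\sqrt6}{5}$, and the coefficient $\alpha_0\,(m_0/\beta_0)(\beta_0'/\alpha_0')$ of $G_{_4}(\beta_0)$ to reproduce the stated complex combination of $\sqrt2$ and $\sqrt6$.

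The main obstacle will not be conceptual but book-keeping with complex algebra: pinning down which root of $\alpha+\beta=1$ is the correct $x_0$, and then fixing consistently the branch of the square root defining $m$ (and of $F_{_4}$ beyond the cut), so that the signs in (\ref{F-relation2}) and (\ref{G-relation2}) come out right. A convenient internal check is the appearance of $\sqrt6=\sqrt2\,\sqrt3$ in the coefficients: it is forced by multiplying the $\sqrt3$ carried by $x_0$ and $\alpha_0$ against the $\sqrt2$ carried by $m_0$, so its presence, together with the reality of the $F_{_4}(\beta_0)$-coefficient, confirms that the computation is on track.
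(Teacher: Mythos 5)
Your proposal is correct and follows essentially the same route as the paper: the paper reuses the same degree-$5$, level-$2$ transformation, takes precisely the complex root $x_0=\frac{2\sqrt 3-3}{4}-\frac{2-\sqrt 3}{4}i$ (identical to yours), reads off $m_0=(3+i)\sqrt 2/10$, and obtains (\ref{G-relation2}) by differentiating the transformation and substituting $x=x_0$ into $\alpha'_0$, $\beta'_0$, $m'_0$. Your added remarks on why $m_0$ is complex (since $\alpha_0<0$ and $\beta_0>1$) and on $|m_0|=1/\sqrt 5$ are consistent with the paper's own observation.
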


\begin{proof}
We use again the transformation of degree $1/d$ with $d=5$:
\begin{equation}\label{F-same-transform}
F_4 (\alpha) = m F_4(\beta),
\end{equation}
where
\[
\alpha=\frac{64x^5(1+x)}{(1+4x^2)(1-2x-4x^2)^2}, \quad \beta = \frac{64x(1+x)^5}{(1+4x^2)(1+22x-4x^2)^2}, 
\]
and
\[
m=\frac{\sqrt{1-2x-4x^2}}{\sqrt{1+22x-4x^2}}.
\]
Another solution of $\beta=1-\alpha$ is the following one:
\[
x_0=\frac{2\sqrt 3 - 3}{4}-\frac{2-\sqrt 3}{4} i, \quad \alpha_0=\frac12-\frac{7\sqrt 3}{24}, \quad \beta_0=\frac12+\frac{7\sqrt 3}{24}, \quad m_0=\frac{(3+i)\sqrt 2}{10}.
\]
It is interesting to note that $|m_0|=1/\sqrt{5}$. We also get
\begin{align}
m'_0 &= \left( \frac{-27}{40} \sqrt{2} - \frac{69}{200} \sqrt{6} \right)-\left( \frac{33}{200} \sqrt{6} + \frac{9}{40} \sqrt{2} \right) i,  \\
\alpha'_0 &= \left( \frac{-23}{240}-\frac{\sqrt{3}}{16} \right) - \left( \frac{\sqrt{3}}{48} + \frac{11}{240}\right) i, \\
\beta'_0 &= \left( \frac{-5}{48} -\frac{\sqrt{3}}{16}\right) + \left( \frac{1}{48} + \frac{\sqrt{3}}{48} \right) i.
\end{align}
Differentiating (\ref{F-same-transform}) with respect to $\alpha$ we have
\begin{equation}\label{G-same-transform}
G_{_4}(\alpha) = \alpha \, \frac{m'}{\alpha'} F_{_4}(\beta) + \alpha \, \frac{m}{\beta} \, \frac{\beta'}{\alpha'} \, G_{_4}(\beta),
\end{equation}
where the $'$ stands for the derivative with respect to $x$. Finally, substituting $x=x_0$, in (\ref{F-same-transform}) and (\ref{G-same-transform}) we arrive at the results stated by the theorem.
\end{proof}
We are ready to prove (\ref{rama-m48}). 
\begin{proof}
Applying to both sides of (\ref{factorization}) with $s=4$ the operator 
\[
\left. \frac{3}{32}\sqrt{6}+\frac{28}{32} \sqrt{6}z \frac{d}{dz} \right|_{z=z_0} = \left. \frac{3}{32}\sqrt{6}+\frac{28}{32}\sqrt{6} \frac{z}{z'} \frac{d}{d\alpha} \right|_{\alpha=\alpha_0}
\]
where here the $'$ means the derivative with respect to $\alpha$, we obtain
\begin{multline}
\sum_{n=0}^{\infty} \frac{\left(\frac12\right)_n\left(\frac14\right)_n\left(\frac34\right)}{(1)_n^3} \left(\frac{-1}{48}\right)^n \left(\frac{28}{32} \sqrt{6} n+\frac{3}{32} \sqrt{6} \right) 
= \frac{3\sqrt 6}{32} F_{_4}(\alpha_0) F_{_4}(\alpha_0) \\ + (1-C i) \left( \frac{3\sqrt 2}{4} + \frac{7}{16} \sqrt{6} \right) F_{_4}(\alpha_0) G_{_4}(\alpha_0) + (1+C i) \left( \frac{3\sqrt 2}{4} + \frac{7}{16} \sqrt{6} \right) G_{_4}(\alpha_0) F_{_4}(\alpha_0).
\end{multline}
Observe that we have intentionally introduced the factors $(1-C i)$ and $(1+C i)$, and that the expression holds for all values of $C$ because the terms with $C$ cancels. Then, we use the relations (\ref{F-relation2}) and (\ref{G-relation2}) to replace one factor $F_4(\alpha_0)$ of the two first terms and to replace $G_4(\alpha_0)$ in the last term. Finally, if we choose $C=1/3$,  we arrive at
\[
\alpha_0 F_{_4(}\alpha_0) G_{_4}(\beta_0) + \beta_0 F_{_4}(\beta_0) G_{_4}(\alpha_0),
\]
which in view of (\ref{Legendre}) is equal to
\[
\frac{1}{\pi} \sin \frac{\pi}{4} = \frac{\sqrt 2}{2 \pi},
\]
and we are done.
\end{proof}

\section{Explicit formulas}

We already know that $z=4\alpha (1-\alpha)$. Here, we give explicit formulas for $b$ and $a$, and also for the modular variable $q$.

\subsection{On modular equations and multipliers}

Transformations of modular origin can be used to prove Ramanujan-type series for $1/\pi$. Those proved in \cite{Cooper-book} and \cite{Coo-Zu} are written like the one used in this paper. For the great quantity of them given by Ramanujan, see \cite[Chapters 19, 20]{Be-vol-3} and \cite[Chapters 33, 36]{Be-vol-5}. 
For the modular equations $P_s(\alpha, \beta)=0$ corresponding to $s=2,3,4,6$ (levels $\ell=4,3,2,1$), the multiplier is given by the following formula:
\begin{equation}\label{m-alpha-beta}
m_s(\alpha, \beta) = \frac{1}{\sqrt{d}} \left( \frac{\beta(1-\beta)}{\alpha(1-\alpha)} \frac{d \alpha}{d \beta} \right)^{1/2},
\end{equation}
where $1/d$ is the degree of the modular equation. Hence the associated transformation of level $\ell$ and degree $1/d$ reads
\[
F_{_s}(\alpha) = m_s(\alpha, \beta) F_{_s}(\beta), \qquad P_s(\alpha, \beta) = 0.
\]
You can see a proof of (\ref{m-alpha-beta}) for the case $s=2$ (level $\ell=4$) in \cite[Entry 24 (vi)]{Be-vol-3}, but a similar proof can be given for the four hypergeometric levels. 

\subsection{Explicit formulas}
To get the explicit formulas for $a$ and $b$ we will use our method and the identity 
\begin{equation}\label{dalpha-dbeta-m}
\frac{\beta'_0}{\alpha'_0} = \frac{1}{d \, m_0^2},
\end{equation}
which comes from (\ref{m-alpha-beta}). First, observe that the level is related to $s$ in the following way:
\begin{equation}\label{level-s}
\ell = 4 \sin^2 \frac{\pi}{s}.
\end{equation}
It is not a coincidence \cite[eq. 27]{Gui-Zud-translation}. Using our method for the case of general series and cases $z_0>0$ or $z_0<0$, that is, applying the operator
\[
a+ \left. b z \frac{d}{dz} \right|_{z_0},
\]
to both sides of (\ref{factorization}), using the substitutions
\[
F_s(\alpha_0) = m F_s(\beta_0), \qquad 
G_s(\alpha_0)=\alpha_0 \frac{m'_0}{\alpha'_0} F_s(\beta_0) + \frac{\alpha_0}{d \, m_0 \, \beta_0} G_s(\beta_0),
\]
in the way that we have explained in the examples, and equating the coefficients of $F(\alpha_0)F(\beta_0)$, $F(\alpha_0)G(\beta_0)$ and $F(\beta_0)G(\alpha_0)$ to $0$, $\alpha_0$ and $\beta_0$ respectively, we deduce the following explicit formulas:
\begin{equation}\label{b-alternating-terms-gen}
b=(1-2\alpha_0) \, \frac{{\rm Re}(m_0)}{\sin \frac{\pi}{s}} \, d, 
\end{equation}
and
\begin{equation}\label{a-alternating-terms-gen}
a=-(1+Ci) \, \frac{\alpha_0 \beta_0}{\alpha'_0} \, \frac{m'_0}{m_0} \, \frac{b}{1-2\alpha_0}, \qquad C=\frac{{\rm Im}(m_0)}{{\rm Re}(m_0)}.
\end{equation}
For the case $z>0$, we have $m_0=1/\sqrt{d}$. Hence, we can write the above formulas in the way
\begin{equation}\label{b-positive-terms}
b=2 (1-2\alpha_0) \, \sqrt{\frac{d}{\ell}}, \qquad a=-2\alpha_0 \beta_0 \frac{m'_0}{\alpha'_0} \, \frac{d}{\sqrt{\ell}}.
\end{equation}
For the case $z<0$ (alternating series), we have observed experimentally that
\begin{equation}\label{conj-m0}
m_0 \, {\overset{?}=} \, \frac{\sqrt{4d-\ell}}{2d}+\frac{\sqrt{\ell}}{2d} \, i, 
\end{equation}
Hence, assuming it, we can write (\ref{b-alternating-terms-gen}) and (\ref{a-alternating-terms-gen}) in the following form:
\begin{equation}\label{b-alternating-terms}
b=2 \, (1-2\alpha_0) \sqrt{\frac{d}{\ell}-\frac14}, \qquad a=-2\alpha_0 \beta_0 \frac{m'_0}{\alpha'_0} \, \frac{d}{\sqrt{\ell}}.
\end{equation}
Finally, we relate the modular variable $q$ with $d$ and $\ell$ assuming (\ref{conj-m0}). Let $q=e^{-2\pi\sqrt{r}}$ and $q=-e^{-2\pi\sqrt{r}}$, the modular variable corresponding to the cases $z>0$ and $z<0$, respectively. From the known formula
\[
4r=\frac{b^2}{1-z}=\frac{b^2}{(1-2\alpha)^2},
\]
we deduce that
\[
r=\frac{d}{4\sin^2 \frac{\pi}{s}} = \frac{d}{\ell}, \quad r=\frac{d}{4\sin^2 \frac{\pi}{s}} - \frac14= \frac{d}{\ell} - \frac14,
\]
for the cases $z>0$ and $z<0$, respectively. Hence,
\begin{equation}\label{q}
q = e^{-2\pi \sqrt{\frac{d}{\ell}}}, \qquad  q = - e^{-2\pi \sqrt{\frac{d}{\ell}-\frac14}},  
\end{equation}
for all the series of positive terms and for all the alternating series respectively.

\subsection{An experimental test}

The test which consist of evaluating numerically 
\begin{equation}\label{test-degree}
\frac{F_s(\alpha_0)}{F_s(\beta_0)}=m_0, \quad \beta_0=1-\alpha_0,
\end{equation}
has been very useful in discovering that $|m_0^2|$ but not $m_0^2$ (algebraic) is a positive rational number, and that $|m_0^2|=1/d$. For example, for the series of level $\ell=3$ ($s=3$) and $z_0=-1/500^2$, as $z_0=4\alpha_0 (1-\alpha_0)$ we obtain 
\[
\alpha_0=\frac12 - \frac{53\sqrt{89}}{1000}, \quad \beta_0=1-\alpha_0 = \frac12 + \frac{53\sqrt{89}}{1000},
\]
and evaluating numerically (\ref{test-degree}), we get with an approximation of $20$ digits that
\[
\left| \frac{F_s(\alpha_0)}{F_s(\beta_0)} \right| \approx  0.20851441405707476267,
\]
which we identify as $1/\sqrt{23}$. Hence, for proving with our method that alternating series, we need a transformation of degree $1/d$ with $d=23$ for the level $\ell=3$, and with such a transformation we can prove it rigorously. See the tables at the end of the paper.

\section*{Acknowledgements}
I am grateful to Shaun Cooper for very interesting questions related to the method used.

\begin{table}[p]
    \begin{tabular}{|c c c c | c c c c|}
        \hline &&&&&&& \\ [-1ex]
        $d$ & $a$ & $b$ & $z<0$ & $d$ & $a$ & $b$ & $z>0$ \\ &&&&&&& \\ [-1ex]
        \hline \hline &&&&&&& \\ 
        $3$ & $\frac12$ & $2$ & $-1$ &
        $3$ & $\frac14$ & $\frac64$ & $\frac14$ \\ &&&&&&& \\
        $5$ & $\frac{1}{2\sqrt2}$ & $\frac{6}{2\sqrt2}$ & $-\frac{1}{8}$ &
        $7$ & $\frac{5}{16}$ & $\frac{42}{16}$ & $\frac{1}{64}$ \\ &&&&&&& \\
        \hline
    \end{tabular}
    \vskip 0.25cm
    \caption{Rational Ramanujan-type series of $\ell=4$ for $1/\pi$}
\end{table}

\begin{table}[b]
    \begin{tabular}{|c c c c | c c c c|}
        \hline &&&&&&& \\ [-1ex]
        $d$ & $a$ & $b$ & $z<0$ & $d$ & $a$ & $b$ & $z>0$ \\ &&&&&&& \\ [-1ex]
        \hline \hline &&&&&&& \\
        $3$ & $\frac{3}{8}$ & $\frac{20}{8}$ & $-\frac{1}{4}$ &
        $2$ & $\frac{2}{9}$ & $\frac{14}{9}$ & $\frac{32}{81}$ \\ &&&&&&& \\
        $4$ & $\frac{8}{9\sqrt7}$ & $\frac{65}{9\sqrt7}$ & $-\frac{16^2}{63^2}$ &
        $3$ & $\frac{1}{2\sqrt3}$ & $\frac{8}{2\sqrt3}$ & $\frac{1}{9}$ \\ &&&&&&& \\
        $5$ & $\frac{3\sqrt3}{16}$ & $\frac{28\sqrt3}{16}$ & $-\frac{1}{48}$ &
        $5$ & $\frac{4}{9\sqrt2}$ & $\frac{40}{9\sqrt2}$ & $\frac{1}{81}$ \\ &&&&&&& \\
        $7$ & $\frac{23}{72}$ & $\frac{260}{72}$ & $-\frac{1}{18^2}$ &
        $9$ & $\frac{27}{49\sqrt3}$ & $\frac{360}{49\sqrt3}$ & $\frac{1}{7^4}$ \\ &&&&&&& \\
        $13$ & $\frac{41\sqrt5}{288}$ & $\frac{644\sqrt5}{288}$ & $-\frac{1}{5 \cdot 72^2}$ &
        $11$ & $\frac{19}{18\sqrt{11}}$ & $\frac{280}{18\sqrt{11}}$ & $\frac{1}{99^2}$ \\ &&&&&&& \\
        $19$ & $\frac{1123}{3528}$ & $\frac{21460}{3528}$ & $-\frac{1}{882^2}$ &
        $29$ & $\frac{4412}{9801\sqrt2}$ & $\frac{105560}{9801\sqrt2}$ & $\frac{1}{99^4}$ \\ &&&&&&& \\
        \hline
    \end{tabular}
    \vskip 0.25cm
    \caption{Rational Ramanujan-type series of $\ell=2$ for $1/\pi$}
\end{table}

\begin{table}[ht]
    \begin{tabular}{|c c c c | c c c c|}
        \hline &&&&&&& \\ [-1ex]
        $d$ & $a$ & $b$ & $z<0$ & $d$ & $a$ & $b$ & $z>0$ \\ &&&&&&& \\ [-1ex]
        \hline \hline &&&&&&& \\
        $3$ & $\frac{\sqrt{3}}{4}$ & $\frac{5\sqrt{3}}{4}$ & $-\frac{9}{16}$ &
        $2$ & $\frac{1}{3\sqrt{3}}$ & $\frac{6}{3\sqrt{3}}$ & $\frac{1}{2}$ \\ &&&&&&& \\
        $5$ & $\frac{7}{12\sqrt3}$ & $\frac{51}{12\sqrt3}$ & $-\frac{1}{16}$ &
        $4$ & $\frac{8}{27}$ & $\frac{60}{27}$ & $\frac{2}{27}$ \\ &&&&&&& \\
        $7$ & $\frac{\sqrt{15}}{12}$ & $\frac{9\sqrt{15}}{12}$ & $-\frac{1}{80}$ &
        $5$ & $\frac{8}{15\sqrt3}$ & $\frac{66}{15\sqrt3}$ & $\frac{4}{125}$ \\ &&&&&&& \\
        $11$ & $\frac{106}{192\sqrt3}$ & $\frac{1230}{192\sqrt3}$ & $-\frac{1}{2^{10}}$
        &&&& \\ &&&&&&& \\
        $13$ & $\frac{26\sqrt7}{216}$ & $\frac{330\sqrt7}{216}$ & $-\frac{1}{3024}$
        &&&& \\ &&&&&&& \\
        $23$ & $\frac{827}{1500\sqrt3}$ & $\frac{14151}{1500\sqrt3}$ & $-\frac{1}{500^2}$
        &&&& \\ &&&&&&& \\
        \hline
    \end{tabular}
    \vskip 0.25cm
    \caption{Rational Ramanujan-type series of $\ell=3$ for $1/\pi$}
\end{table}

\begin{table}[ht]
    \begin{tabular}{|c c c c | c c c c|}
        \hline &&&&&&& \\ [-1ex]
        $d$ & $a$ & $b$ & $z<0$ & $d$ & $a$ & $b$ & $z>0$ \\ &&&&&&& \\ [-1ex]
        \hline \hline &&&&&&& \\
        $2$ & $\frac{8}{5\sqrt{15}}$ & $\frac{63}{5\sqrt{15}}$ & $-\frac{4^3}{5^3}$ &
        $2$ & $\frac{3}{5\sqrt5}$ & $\frac{28}{5\sqrt5}$ & $\frac{3^3}{5^3}$ \\ &&&&&&& \\
        $3$ & $\frac{15}{32\sqrt2}$ & $\frac{154}{32\sqrt2}$ & $-\frac{3^3}{8^3}$ &
        $3$ & $\frac{6}{5\sqrt{15}}$ & $\frac{66}{5\sqrt{15}}$ & $\frac{4}{5^3}$ \\ &&&&&&& \\
        $5$ & $\frac{25}{32\sqrt6}$ & $\frac{342}{32\sqrt6}$ & $-\frac{1}{8^3}$ &
        $4$ & $\frac{20}{11\sqrt{33}}$ & $\frac{252}{11\sqrt{33}}$ & $\frac{2^3}{11^3}$ \\ &&&&&&& \\
        $7$ & $\frac{279}{160\sqrt{30}}$ & $\frac{4554}{160\sqrt{30}}$ & $-\frac{9}{40^3}$ &
        $7$ & $\frac{144\sqrt3}{85\sqrt{85}}$ & $\frac{2394\sqrt{3}}{85\sqrt{85}}$ & $\frac{4^3}{85^3}$ \\ &&&&&&& \\
        $11$ & $\frac{526\sqrt{15}}{80^2}$ & $\frac{10836 \sqrt{15}}{80^2}$ & $-\frac{1}{80^3}$ &&&& \\ &&&&&&& \\
        $17$ & $\frac{10177\sqrt{330}}{3 \cdot 440^2}$ & $\frac{261702\sqrt{330}}{3\cdot 440^2}$ & $-\frac{1}{440^3}$ &&&& \\ &&&&&&& \\
        $41$ & $\frac{27182818\sqrt{10005}}{3 \cdot 53360^2}$ & $\frac{1090280268\sqrt{10005}}{3\cdot 53360^2}$ & $-\frac{1}{53360^3}$ &&&& \\ &&&&&&& \\
        \hline
    \end{tabular}
    \vskip 0.25cm
    \caption{Rational Ramanujan-type series of $\ell=1$ for $1/\pi$}
\end{table}

\end{document}